\newtheorem{thm}{Theorem}
\newtheorem{lem}[thm]{Lemma}
\newtheorem{cor}[thm]{Corollary}
\newtheorem{rem}[thm]{Remark}
\theoremstyle{remark}
\theoremstyle{plain} 
\newcommand{\thistheoremname}{}
\newtheorem*{genericthm*}{\thistheoremname}
\newenvironment{namedthm*}[1]
{\renewcommand{\thistheoremname}{#1}%
	\begin{genericthm*}}
	{\end{genericthm*}}
\numberwithin{equation}{section} \numberwithin{thm}{section}
\newtheorem{definition}[thm]{Definition}
\newcommand*\patchAmsMathEnvironmentForLineno[1]{%
	\expandafter\let\csname old#1\expandafter\endcsname\csname #1\endcsname
	\expandafter\let\csname oldend#1\expandafter\endcsname\csname end#1\endcsname
	\renewenvironment{#1}%
	{\linenomath\csname old#1\endcsname}%
	{\csname oldend#1\endcsname\endlinenomath}}%
\newcommand*\patchBothAmsMathEnvironmentsForLineno[1]{%
	\patchAmsMathEnvironmentForLineno{#1}%
	\patchAmsMathEnvironmentForLineno{#1*}}%
\newcommand{\dist}{{\rm dist}\,}
\newcommand{\rd}{{\mathbb R}^d}
\newcommand{\R}{{\mathbb R}}
\newcommand{\ep}{\varepsilon}
\newcommand{\K}{\mathcal{K}}
\newcommand{\eps}{\varepsilon}
\newcommand{\inter}{\mathrm{int}}
\def\en{\mathbb N}
\def\er{\mathbb R}
\def\q{\mathbb Q}
\def\qp{\mathbb Q^*}
\def\K{\mathcal K}
\def\S{\mathfrak S}
\newcommand{\graph}{\operatorname{graph}}
\newcommand{\conv}{\operatorname{conv}}
\newcommand{\WDC}{{W\! DC}}
\def\halfsq{\hbox{\kern1pt\vrule height 7pt\vrule width6pt height 0.4pt depth0pt\kern1pt}}
\def\ihalfsq{\hbox{\kern1pt \vrule width6pt height 0.4pt depth0pt
		\vrule height 7pt \kern1pt}}
\begin{document}

\title{Remarks on WDC sets}
\author{Du\v san Pokorn\'y}\author{Lud\v ek Zaj\'i\v cek}

\thanks{The research was supported by GA\v CR~18-11058S}

\begin{abstract} 
We study WDC sets, which form a substantial generalization of sets
	with positive reach and still admit the definition of curvature measures. 
	Main results concern WDC sets $A\subset \R^2$. We prove that, for such $A$, the distance function
	 $d_A= \dist(\cdot,A)$ is a ``DC aura'' for $A$, which implies that each locally WDC set in $\R^2$
	 is a WDC set. An another consequence is
	that compact WDC subsets of $\R^2$ form a Borel subset of the space
	of all compact sets.
\end{abstract}

\email{dpokorny@karlin.mff.cuni.cz}
\email{zajicek@karlin.mff.cuni.cz}

\keywords{Distance function, WDC set, DC function, DC aura, Borel complexity}
\subjclass[2010]{26B25}
\date{\today}
\maketitle

\section{Introduction}
In \cite{PR} (cf. also \cite{FPR}, \cite{Fu2} and \cite{PRZ}), the authors introduced the class of WDC sets which form a substantial generalization of sets
 with positive reach and still admit the definition of curvature measures. 
The following  question naturally arises (see  \cite[Question 2, p. 829]{FPR} and \cite[10.4.3]{Fu2}).

\smallskip
{\bf Question.} 
  Is the distance function  $d_A= \dist(\cdot,A)$ of  each WDC set  $A\subset \R^d$ a DC aura for $F$ (see Definition~\ref{def:WDC})?
\smallskip

We answer this question positively
 in the case $d=2$  (Theorem \ref{thm:distanceAura} below); it remains open for $d\geq 3$.
The proof is based on 
 a characterization (proved in \cite{PRZ})  of locally WDC sets in $\R^2$ and the main result
 of \cite{PZ} which asserts that
\begin{equation}\label{dcgr}
\text{$d_A$ a DC function if $A\subset \R^2$ is a graph of a $DC$ function $g: \R \to \R$.}
\end{equation}
Recall that a function is called DC, if it is the difference of two convex functions and note
 that each set $A$ as in \eqref{dcgr} is a WDC set.

Theorem   \ref{thm:distanceAura} easily implies that each locally WDC set in $\R^2$ is WDC.
 Further, we use Theorem   \ref{thm:distanceAura} to prove that compact WDC subsets of $\R^2$  form a Borel subset of the space
 of all compact sets of $\R^2$ (Theorem  \ref{complwdc} (i)). The importance of this result is the fact that it 
 suggests that (at least in $\R^2$) a theory of point processes on the space of compact WDC sets (analogous to the concept of point processes on the space of sets with positive reach introduced in \cite{Zah86})
 can be build. 
 
 Concerning the compact WDC subsets of $\R^d$ for $d>2$, we are able to prove only a weaker fact that they form an analytic set
  (Theorem  \ref{complwdc} (ii)) which is not probably sufficient for the above mentioned application.

\section{Preliminaries}
\subsection{Basic definitions}\label{basic}
The symbol $\q$ denotes the set of all rational numbers. In any vector space $V$, we use the symbol $0$ for the zero element. 
We denote by $B(x,r)$ ($U(x,r)$) the closed (open) ball with centre $x$ and radius $r$.
 The boundary and the interior of a set $M$ are denoted by $\partial M$ and $\inter M$, respectively. A mapping is called $K$-Lipschitz if it is Lipschitz with a (not necessarily minimal) constant $K\geq 0$.

 The metric space of all real-valued continuous functions on a compact $K$ (equipped with the usual supremum metric $\rho_{\sup}$) will be denoted $C(K)$.

In the Euclidean space $\R^d$,  the norm is denoted by $|\cdot|$ and  the scalar product  by $\langle \cdot,\cdot\rangle$. By $S^{d-1}$ we denote the unit sphere in $\R^d$.


The distance function from a set $A\subset \R^d$ is $d_A\coloneqq  \dist(\cdot,A)$ and 
 the metric projection of $z\in \R^d$ to $A$ is   
 $\Pi_A(z)\coloneqq \{ a\in A:\, \dist(z,A)=|z-a|\}$.


\subsection{DC functions}\label{dc}

 Let $f$ be a real function defined on an open convex set $C \subset \R^d$. Then we say
	 that $f$ is a {\it DC function}, if it is the difference of two convex functions. Special DC
	 functions are semiconvex and semiconcave functions. Namely, $f$ is a {\it semiconvex} (resp.
	 {\it semiconcave}) function, if  there exist $a>0$ and a convex function $g$ on $C$ such that
	$$   f(x)= g(x)- a \|x\|^2\ \ \ (\text{resp.}\ \  f(x)= a \|x\|^2 - g(x)),\ \ \ x \in C.$$

We will use the following well-known properties of DC functions.

\begin{lem}\label{vldc}
Let $C$ be an open convex subset of $\R^d$. Then the following assertions hold.
\begin{enumerate}
\item[(i)]
	If $f: C\to \R$ and $g: C\to \R$ are DC, then (for each $a\in \R$, $b\in \R$) the functions $|f|$, $af + bg$, 
	 $\max(f,g)$ and $\min(f,g)$ are DC.
	\item[(ii)]
	Each locally DC  function  $f:C \to \R$  is DC. 
		\item[(iii)]	Each  DC  function  $f:C \to \R$  is 
  Lipschitz on each compact convex set $Z\subset C$.
	\item[(iv)]
Let $f_i: C \to \R$, $i=1,\dots,m$, be DC functions. Let $f: C \to \R$ be a continuous function
 such that $f(x) \in \{f_1(x),\dots,f_m(x)\}$ for each $x \in C$. Then $f$ is DC on $C$.
\end{enumerate}
\end{lem}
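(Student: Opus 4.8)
The plan is to treat the four items separately: (i) and (iii) by direct elementary arguments with convex functions, (ii) by invoking a classical theorem of Hartman, and (iv) by reducing --- via (ii) --- to a local gluing statement, which carries the real content.

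For (i), write $f=f_1-f_2$ and $g=g_1-g_2$ with $f_i,g_i$ convex on $C$. Since $af_1$ is convex for $a\ge0$ and equals the difference $0-(-a)f_1$ of convex functions for $a<0$, every $af+bg$ is a sum of DC functions, hence DC. For the maximum I would use
\[
\max(f,g)=\max(f_1+g_2,\ g_1+f_2)-(f_2+g_2),
\]
whose right-hand side is a difference of convex functions because the pointwise maximum of two convex functions is convex; then $\min(f,g)=-\max(-f,-g)$ and $|f|=\max(f,-f)$ are DC since $-f$ is DC.

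For (iii), I would start from the standard fact that a convex function on an open convex set is locally bounded, hence locally Lipschitz. Given a compact convex $Z\subset C$, choose $\delta>0$ with $Z_\delta:=\{x:\dist(x,Z)\le\delta\}\subset C$; this set is compact and convex, and each $f_i$ is bounded on it, say by $M$. For distinct $x,y\in Z$ the point $z:=y+\delta(y-x)/|y-x|$ lies in $Z_\delta$ and $y$ lies on the segment $[x,z]$, so convexity of $f_i$ gives $f_i(y)-f_i(x)\le\tfrac{|y-x|}{|y-x|+\delta}(f_i(z)-f_i(x))\le\tfrac{2M}{\delta}|y-x|$, and the symmetric estimate shows $f_i$, hence $f=f_1-f_2$, is Lipschitz on $Z$.

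Item (ii) is Hartman's theorem on differences of convex functions, which I would simply quote (its proof glues local representations over a compact exhaustion of $C$ and is not short). For (iv), by (ii) it suffices to prove $f$ is DC near each $x_0\in C$, and I would induct on $m$, the case $m=1$ being trivial. For the step put $F=\{x:f(x)=f_m(x)\}$, a closed set: on the open set $C\setminus F$ we have $f(x)\in\{f_1(x),\dots,f_{m-1}(x)\}$, so $f$ is locally DC there by the inductive hypothesis, and on $\inter F$ we have $f=f_m$. The only serious case is $x_0\in\partial F$: there $f(x_0)=f_m(x_0)$, and on a small ball $B\ni x_0$ we may discard each $f_i$ with $f_i(x_0)\ne f(x_0)$ (it stays $\ne f$ on $B$ by continuity of $f-f_i$), so subtracting $f_m$ the task becomes to show that a continuous $h$ on $B$ with $h(x)\in\{0,g_1(x),\dots,g_k(x)\}$, where the $g_j$ are DC and vanish at $x_0$, is DC near $x_0$. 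When $k=1$, on each connected component of $\{g_1\ne0\}$ the continuous $h$ is identically $0$ or identically $g_1$, so this is precisely the assertion that $g_1\cdot\ind{E}$ is DC whenever $E\subset B$ is open and $g_1$ vanishes on $\partial E$ --- and this is the step I expect to be the main obstacle. It is elementary in dimension one: splitting according to the sign of $g_1$, the function $g_1|_{\overline E}$ has a one-sided extremum at each boundary point of $E$, so $g_1\cdot\ind{E}$ acquires only convex kinks there and any control function for $g_1$ controls $g_1\cdot\ind{E}$ as well; in $\R^d$ I would either push this control-function estimate through or cite it from the literature on DC functions, the case $k\ge2$ being handled by the same circle of ideas.
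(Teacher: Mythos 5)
Items (i)--(iii) of your proposal are fine: (i) and (iii) are correct elementary arguments (the paper simply cites the literature for these), and for (ii) you quote Hartman's theorem, exactly as the paper does.

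The gap is in (iv). Your reduction itself is sound up to a point: inducting on $m$, localizing at $x_0\in\partial F$, discarding the $f_i$ with $f_i(x_0)\neq f(x_0)$, and subtracting $f_m$ correctly reduces matters to showing that a continuous $h$ with $h(x)\in\{0,g_1(x),\dots,g_k(x)\}$, the $g_j$ DC and vanishing at $x_0$, is DC near $x_0$; and for $k=1$ your connectedness argument does show $h=g_1\cdot\ind{E}$ with $E$ open and $g_1=0$ on $\partial E$. But the statement that $g_1\cdot\ind{E}$ is then DC on a ball in $\R^d$ is precisely the hard content of (iv): it is itself an instance of the mixing phenomenon (a continuous selection from the DC functions $0$ and $g_1$), so you have reduced the lemma to a special case of itself. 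You prove this special case only for $d=1$; the phrase ``push this control-function estimate through or cite it from the literature'' is not a proof, and the one-dimensional kink argument does not transfer to $\R^d$, where $\partial E$ can be an arbitrary closed subset of the zero set of $g_1$. Moreover, the claim that $k\ge 2$ is ``handled by the same circle of ideas'' is not justified: for several $g_j$ the selection can switch not only across $\{g_j=0\}$ but across the sets $\{g_i=g_j\}$, so the problem does not split into components as in your $k=1$ argument. The paper closes exactly this gap by citing the Mixing Lemma \cite[Lemma 4.8]{VeZa} of Vesel\'y and Zaj\'i\v cek, which is a genuinely nontrivial result; to complete your proof you should either cite that lemma directly (making your reduction unnecessary) or supply an actual proof of the gluing step in $\R^d$.
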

\begin{proof}
Property (i) follows easily from definitions, see e.g. \cite[p. 84]{Tuy}. Property (ii) was proved in \cite{H}.
 Property (iii) easily follows from the local Lipschitzness of  convex functions. Assertion (iv) is a special case of \cite[Lemma 4.8.]{VeZa} (``Mixing lemma'').
\end{proof}


It is well-known (cf. \cite{PZ}) that if $\emptyset \neq A\subset \R^d$ is closed, then $d_A$ need not be DC; however (see, e.g., \cite[Proposition 2.2.2]{CS}), 
\begin{equation}\label{loksem}
\text{$d_A $ is locally semiconcave (and so locally DC) on $\R^d \setminus A$.}
\end{equation}

\subsection{Clarke generalized gradient}\label{clar}

If  $U\subset \R^d$ is an open set, $f:U\to\R$  is locally Lipschitz and $x\in U$, we denote by $\partial_C f(x)$ the {\it generalized gradient of $f$ at $x$}, which can be defined as the closed convex hull of all limits $\lim_{i\to\infty}f'(x_i)$ such that $x_i\to x$ and $f'(x_i)$ exists for all $i\in\en$ (see \cite[S1.1.2]{C}; $\partial_C f(x)$ is also called \emph{Clarke subdifferential of $f$ at $x$} in the literature). Since we identify $(\R^d)^*$ with $\R^d$ in the standard way, we sometimes consider $\partial_C f(x)$ as a subset of $\R^d$. We will repeatedly use the fact that the mapping $x\mapsto\partial_C f(x)$ is upper semicontinuous and, hence (see \cite[Theorem~2.1.5]{C}), 
\begin{equation}\label{uzcl}
v\in\partial_C f(x)\ \ \text{ whenever}\ \  x_i\to x,\  v_i\in\partial_C f(x_i)\ \ \text{ and}\ \  v_i\to v.
\end{equation}
We also use that $|u|\leq K$ whenever $u\in\partial_C f(x)$ and $f$ is $K$-Lipschitz on a neighbourhood
 of $x$.
Obviously,
\begin{equation}\label{cls}
\partial_C (\alpha f)(x) = \alpha \partial_C f(x).
\end{equation}

Recall that
\begin{equation}\label{clder1}
f^0(x,v)\coloneqq  \limsup_{y\to x, t\to 0+}  \ \frac{f(y+tv)-f(y)}{t}
\end{equation}
and  (see \cite{C})
\begin{equation}\label{clder2}
f^0(x,v) = \sup \{ \langle v, \nu\rangle:\ \nu\in \partial_Cf(x)\}.
\end{equation}
We will need the following simple lemma.
\begin{lem}\label{clod}
Let $f$ be a Lipschitz function on an open set $G \subset \R^d$, $x \in G$ and $\ep>0$.
\begin{enumerate}
\item[(i)] 
If $\dist(0, \partial_Cf(x)) \geq 2\ep$, then
\begin{equation}\label{derpod}
\exists v\in  S^{d-1}, \rho>0 \ \forall y \in U(x,\rho), 0<\alpha< \rho:\  \frac{f(y+\alpha v) -f(y)}{\alpha} \leq - \ep.
\end{equation}
\item[(ii)]
If \eqref{derpod} holds, then $\dist(0, \partial_Cf(x)) \geq \ep$.
\end{enumerate}
\end{lem}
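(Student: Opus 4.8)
The plan is to prove (i) and (ii) by reformulating the condition $\dist(0,\partial_C f(x))\geq c$ in terms of the generalized directional derivative $f^0$ via \eqref{clder2}, and then relating $f^0$ to the difference quotients appearing in \eqref{derpod}.

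For part (i), the key observation is that $\dist(0,\partial_C f(x))\geq 2\ep$ means that the compact convex set $\partial_C f(x)\subset\R^d$ is separated from $0$. By a standard separation argument, there exists a unit vector $w\in S^{d-1}$ such that $\langle w,\nu\rangle\geq 2\ep$ for every $\nu\in\partial_C f(x)$; indeed, one may take $w$ to be the unit vector in the direction of the (unique) nearest point of $\partial_C f(x)$ to the origin. Setting $v=-w$, formula \eqref{clder2} gives
\begin{equation*}
f^0(x,v)=\sup\{\langle v,\nu\rangle:\nu\in\partial_C f(x)\}=-\inf\{\langle w,\nu\rangle:\nu\in\partial_C f(x)\}\leq -2\ep<-\ep.
\end{equation*}
Now I unfold the definition \eqref{clder1} of $f^0(x,v)$ as a $\limsup$ over $y\to x$, $t\to 0+$: since this $\limsup$ is $\leq -2\ep$, there is a radius $\rho>0$ such that for all $y\in U(x,\rho)$ and all $0<\alpha<\rho$ one has $\frac{f(y+\alpha v)-f(y)}{\alpha}\leq -\ep$ (here the slack between $-2\ep$ and $-\ep$ absorbs the fact that a $\limsup$ is only eventually bounded by any strictly larger number). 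This is exactly \eqref{derpod}. One should also note $\rho$ can be shrunk so that $U(x,\rho)$ and the relevant translates stay inside $G$, using that $G$ is open.

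For part (ii), suppose \eqref{derpod} holds with some $v\in S^{d-1}$ and $\rho>0$. Then directly from the definition \eqref{clder1}, taking the $\limsup$ as $y\to x$, $t\to 0+$ of quantities each $\leq -\ep$, we get $f^0(x,v)\leq -\ep$. By \eqref{clder2}, $\langle v,\nu\rangle\leq -\ep$, i.e. $\langle -v,\nu\rangle\geq \ep$, for every $\nu\in\partial_C f(x)$. Since $|-v|=1$, this forces $|\nu|\geq \langle -v,\nu\rangle\geq\ep$ for all $\nu\in\partial_C f(x)$, hence $\dist(0,\partial_C f(x))\geq\ep$.

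I do not expect any serious obstacle here; the lemma is essentially a dictionary between the three equivalent descriptions of "$f$ decreases in some direction near $x$" (distance of $\partial_C f$ from $0$, negativity of $f^0$, uniform negativity of difference quotients). The only mild subtlety is bookkeeping: in (i) one loses a factor of $2$ in passing from the bound on $f^0$ to the uniform bound on difference quotients (because the $\limsup$ definition only gives the inequality eventually, on a small enough neighbourhood), which is precisely why the hypothesis is stated with $2\ep$ while the conclusion has $\ep$; conversely (ii) recovers $\ep$ with no loss. Care must also be taken that the point $y+\alpha v$ lies in the domain $G$, which is handled by choosing $\rho$ small.
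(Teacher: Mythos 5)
Your proposal is correct and follows essentially the same route as the paper: both parts obtain the direction $v$ by separating the compact convex set $\partial_C f(x)$ from the origin (the paper cites a separation theorem where you use the nearest-point argument, which amounts to the same thing), then translate between $\dist(0,\partial_C f(x))$, the bound $f^0(x,v)\leq -2\ep$ (resp.\ $\leq-\ep$) via \eqref{clder2}, and the uniform difference-quotient bound via the $\limsup$ definition \eqref{clder1}. The bookkeeping you describe (the factor $2$ absorbed when unfolding the $\limsup$ in (i), no loss in (ii)) matches the paper's argument.
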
 
\begin{proof}
(i)\ Let $\dist(0, \partial_Cf(x)) \geq 2\ep$. Since $\partial_Cf(x)$ is convex, there exists
 (see e.g. \cite[Theorem 1.5.]{DL}) $v\in  S^{d-1}$
 such that
$$\dist(0, \partial_Cf(x))= - \sup\{\langle v, \nu\rangle:\ \nu \in \partial_Cf(x)\}.$$
So, by \eqref{clder2}, $f^0(x,v) \leq -2\ep$ and thus \eqref{clder1} implies \eqref{derpod}.

(ii)\ If  \eqref{derpod} holds, choose corresponding 
 $v\in  S^{d-1}$ and $ \rho>0$. Then  $f^0(x,v) \leq -\ep$ by \eqref{clder1}. Consequently, by
 \eqref{clder2},
$- |\nu| \leq \langle v, \nu\rangle \leq -\ep$ for each $\nu \in \partial_Cf(x)$ and so 
 $\dist(0, \partial_Cf(x)) \geq \ep$.
\end{proof}

\subsection{ WDC sets}\label{wdc}

WDC sets (see the definition below) which provide a natural generalization of sets with positive reach were defined in
 \cite{PR} using Fu's notion of an ``aura'' of a set (see, e.g., \cite{Fu2} for more information).
 Note that the notion of a DC aura were defined in \cite{PR} and \cite{FPR} by a formally different but equivalent way (cf. \cite[Remark 2.12 (v)]{PRZ}).

\begin{definition}[cf. \cite{PRZ}, Definitions~2.8, 2.10]\label{def:WDC}
	Let $U\subset\R^d$ be open and $f:U\to\R$ be locally Lipschitz. A number $c\in \R$ is called a
 {\it weakly regular value} of $f$ if whenever $x_i\to x$, $f(x_i)>c=f(x)$ and $u_i\in\partial_C f(x_i)$ for all $i\in\en$ then $\liminf_i|u_i|>0$.

A set $A\subset\rd$ is called {\it WDC} if there exists a DC function $f:\rd\to [0,\infty)$ such that $A=f^{-1}\{0\}$ and $0$ is a weakly regular value of $f$. 
In such a case, we call $f$ a {\it DC aura} (for $A$).

A set $A\subset \R^d$ is called locally WDC if for any point $a \in A$ there exists a WDC set $A^*\subset \R^d$
 that agrees with $A$ on an open neighbourhood of $a$.
	\end{definition}
	
	(Note that a weakly regular value of $f$ need not be  in the range of $f$, and so $\emptyset$
	  is clearly a WDC set  by our definition.)
		\smallskip
		
		Note that a set $A \subset \R^d$ has a positive reach at each point if and only there exists a DC aura for $A$ which is even semiconvex (\cite{Ba}).

\section{Distance function of a WDC set in $\R^2$ is an aura}\label{distwdc}

First we present (slightly formally rewritten)  \cite[Definition~7.9]{PRZ}.

\begin{definition}\label{sektory}
	\begin{enumerate}
		\item[(i)]
		A set $S \subset \R^2$ will be called a {\it basic open DC sector} (of radius $r$) if 
		$S= U(0,r) \cap \{(u,v) \in \R^2:\ u\in (-\omega, \omega), v >f(u)\}$, where  $0<r<\omega$ and $f$ is a DC function on $(-\omega, \omega)$
		such that $f(0)=0$, $R(u)\coloneqq  \sqrt {u^2 + f^2(u)}$ is strictly increasing on $[0,\omega)$ and 
		strictly decreasing on $(-\omega,0]$. 
		
		By an {\it open DC sector} (of radius $r$) we mean an image $\gamma(S)$ of a  basic open DC sector $S$ (of radius $r$) under a rotation around the origin $\gamma$.
		
		\item[(ii)]
		A set of the form $\gamma( \{(u,v) \in \R^2:\ u\in [0, \omega),   g(u) \leq v \leq f(u)\}  ) \cap U(0,r)$, where $\gamma$ is a rotation around the origin, $0<r<\omega$ and $f,g: \R\to\er$ are DC functions such that $g \leq f$ on $[0,\omega)$, $f(0)=g(0)=f'_+(0)=g'_+(0)=0$ and
		the functions $R_f(u)\coloneqq  \sqrt {u^2 + f^2(u)}$, $R_g(u)\coloneqq  \sqrt {u^2 + g^2(u)}$ are strictly increasing on $[0,\omega)$,  will be called a
		{\it
			degenerated closed DC sector} (of radius $r$).
	\end{enumerate}
\end{definition}
We will use the following complete characterization of locally WDC sets in $\R^2$ 
 (\cite[Theorem~8.14]{PRZ}).
\begin{namedthm*}{Theorem PRZ}\label{T:characterisation} 
	Let $M$ be a closed subset of $\er^2$. Then $M$ is a  locally WDC set if and only if for each $x\in\partial M$
	there is $\rho>0$ such that one of the following conditions holds:
	\begin{enumerate}[label={\rm (\roman*)}]
		\item\label{cond:singlePointNew} $M\cap U(x,\rho)=\{x\}$,
		\item\label{cond:oneConeNew} there is a degenerated closed DC sector $C$ of radius $\rho$ such that $$M\cap U(x,\rho)=x+C,$$
		\item\label{cond:manyConesNew} there are pairwise disjoint  open DC sectors $C_1,\dots,C_k$ of radius $\rho$ such that 
		\begin{equation}\label{kolac}
		U(x,\rho) \setminus M =\bigcup_{i=1}^k \left(x+C_i\right).
		\end{equation}
	\end{enumerate}
\end{namedthm*}

\begin{lem}\label{lem:distranceFromGraphWR}
	Let $f$ be an $L$-Lipschitz function on $\er$. 
	Denote $d\coloneqq \dist(\cdot,\graph f)$.
	Then $|\xi_2|\geq \frac{1}{\sqrt{L^2+1}}$ whenever
	 $\xi=(\xi_1,\xi_2)\in\partial_C d(x)$ and
	$x\in\er^2\setminus\graph f$.
\end{lem}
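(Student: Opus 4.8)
The plan is to work directly with the geometry of the nearest-point projection onto $\graph f$. Fix $x = (x_1,x_2) \in \er^2 \setminus \graph f$ and let $\xi = (\xi_1,\xi_2) \in \partial_C d(x)$. Since $\partial_C d(x)$ is the closed convex hull of limits of gradients $d'(x_i)$ with $x_i \to x$ and $d$ differentiable at $x_i$, and since the bound $|\xi_2| \ge (L^2+1)^{-1/2}$ is a constraint defining two half-planes whose union is \emph{not} convex, I cannot simply pass to the convex hull. Instead I first establish the bound for $\xi = d'(y)$ at every point $y$ of differentiability near $x$, then take limits via \eqref{uzcl}, and finally check that convex combinations that could arise actually stay in one of the two half-planes --- this last point needs the extra observation that, \emph{locally near a fixed $x$}, all the relevant gradient limits lie on one side (the sign of the vertical component of $y - \Pi_{\graph f}(y)$ is locally constant off the graph, being positive above the graph and negative below).

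First I would recall the standard fact that wherever $d$ is differentiable at $y \notin \graph f$, the projection $\Pi_{\graph f}(y)$ is a singleton $\{p\}$ and $d'(y) = (y - p)/|y - p|$, a unit vector pointing from the foot point to $y$. So it suffices to bound $|(y-p)_2|/|y-p|$ from below. Write $p = (t, f(t))$. The key geometric input is that the segment from $p$ to $y$ must be ``normal'' to the graph in the sense that $p$ is the closest point: this forces $y - p$ to lie in the normal cone to $\graph f$ at $p$. For an $L$-Lipschitz graph, the tangent directions at $p$ (in the generalized sense, using $\partial_C f(t)$) all have slope in $[-L, L]$, hence lie in the double cone $\{(a,b) : |b| \le L|a|\}$; the polar of this cone is $\{(a,b) : |a| \le |b|/L\} = \{(a,b): L|a| \le |b|\}$... more precisely the normal cone is $\{(a,b) : |a| \le L^{-1}\,|b|\}$ after the correct computation, which gives for a unit vector $v=(v_1,v_2)$ in it that $v_1^2 \le L^{-2} v_2^2$, i.e. $1 = v_1^2 + v_2^2 \le (1 + L^{-2}) v_2^2$, hence $|v_2| \ge (1+L^{-2})^{-1/2} = L/\sqrt{L^2+1} \ge 1/\sqrt{L^2+1}$. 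I would make the normal-cone claim rigorous by a direct first-order argument: if $|(y-p)_1|$ were too large relative to $|(y-p)_2|$, then moving the foot point slightly along the graph from $p$ in the direction decreasing $|(y-p)_1|$ would strictly decrease the distance to $y$, contradicting minimality; the Lipschitz bound on $f$ controls the vertical movement incurred.

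The main obstacle is the non-convexity issue flagged above: having the bound at each differentiability point and at each limit of gradients, I must rule out a convex combination $\xi = \lambda \xi' + (1-\lambda)\xi''$ with $\xi'$ in the ``upper'' half-plane $\{\xi_2 \ge c\}$ and $\xi''$ in the ``lower'' one $\{\xi_2 \le -c\}$ producing a $\xi$ with small $|\xi_2|$. I would resolve this by shrinking to a neighborhood $U(x,r)$ small enough that $U(x,r) \cap \graph f = \emptyset$ (possible since $x \notin \graph f$ and $\graph f$ is closed), so that every $y \in U(x,r)$ is strictly on one side of the graph; the sign of $(y - \Pi_{\graph f}(y))_2$ is then the same for all such $y$, hence all gradient limits feeding into $\partial_C d(x)$ lie in a single half-plane $\{\xi_2 \ge c\}$ or all in $\{\xi_2 \le -c\}$, which \emph{is} convex, and the bound passes to the convex hull. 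If $x$ lies ``level with'' part of the graph so that this sign argument is delicate, I note $d$ is actually differentiable on a dense set and continuous, and the half-plane containing the support of $\partial_C d$ is determined by continuity; a short compactness argument closes the gap.
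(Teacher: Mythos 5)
Your overall strategy is sound and genuinely different from the paper's. The paper works at the single point $x$ (normalized to $0$): it cites Fu's formula $\partial_C d(x)=\conv\{(x-p)/d(x):\,p\in\Pi_{\graph f}(x)\}$ and locates the nearest points by comparing $f$ with the lower semicircle $g(u)=-\sqrt{r^2-u^2}$, $r=d(x)$; the Lipschitz bound at a touching point forces $|u|\le Lr/\sqrt{1+L^2}$, so every extreme point of $\partial_C d(x)$ has second coordinate at least $1/\sqrt{1+L^2}$ and the convexity issue never arises. You instead use the definition of $\partial_C$ as a convex hull of gradient limits, the standard formula $\nabla d(y)=(y-p)/|y-p|$ at differentiability points, a first-order minimality (normal cone) bound on the direction $y-p$, and one-sidedness on a small ball disjoint from the graph to keep all relevant vectors in a single, hence convex, half-plane. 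Your route is more self-contained (no appeal to Fu's lemma) and would extend verbatim to Lipschitz graphs in higher dimensions; the paper's is shorter because Fu's formula packages the convex-hull step in one stroke.

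Two points need repair. First, your cone computation is inverted: for an $L$-Lipschitz graph the admissible directions $y-p$ satisfy $|(y-p)_1|\le L\,|(y-p)_2|$, not $|(y-p)_1|\le L^{-1}|(y-p)_2|$; your intermediate claim $|v_2|\ge L/\sqrt{L^2+1}$ is false for $L>1$ (take $f(u)=Lu$: the unit normals have $|v_2|=1/\sqrt{L^2+1}$), and the chain $L/\sqrt{L^2+1}\ge 1/\sqrt{L^2+1}$ fails for $L<1$. The direct perturbation argument you sketch does give the correct inequality: writing $y-p=(a,b)$ with, say, $a>L|b|$, one gets $|y-(t+s,f(t+s))|^2\le |y-p|^2-2s(a-L|b|)+(1+L^2)s^2<|y-p|^2$ for small $s>0$, contradicting minimality; hence $|a|\le L|b|$ and $|(y-p)_2|/|y-p|\ge 1/\sqrt{L^2+1}$, exactly the bound of the lemma. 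Carry that out and drop the polar-cone discussion. Second, the sign statement you rely on (every nearest point of a point lying strictly above the graph lies strictly below it) is true but needs a line of proof: if $y_2>f(y_1)$ and $p=(t,f(t))$ were a nearest point with $f(t)\ge y_2$, the intermediate value theorem would give $s$ strictly between $y_1$ and $t$ with $f(s)=y_2$, and $(s,y_2)$ is strictly closer to $y$ than $p$. With this in hand your closing sentence about the ``level with the graph'' case is unnecessary: once $U(x,r)$ is chosen disjoint from the graph it lies entirely on one side, all gradient limits lie in one closed half-plane $\{\xi:\ \pm\xi_2\ge 1/\sqrt{L^2+1}\}$, and that half-plane is preserved under convex hulls.
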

\begin{proof}
	Pick $x\in\er^2\setminus\graph f$.
	Without any loss of generality we can assume that $x=0$. We will assume that $f(0)<0$; the case
	 $f(0)>0$ is quite analogous.
	Denote $r\coloneqq d(0)$ and $P\coloneqq  \Pi_{\graph f}(0)$. 
	 Set  $g(u)\coloneqq-\sqrt{r^2-u^2}$, $u\in[-r,r]$.
	Clearly  $f\leq g$ on $[-r,r]$ and 
	 $(u,v)\in P$ if and only if $f(u)=g(u)=v$.
	We will show that 
	\begin{equation}\label{odhu}
	\text{$|u|\leq \frac{Lr}{\sqrt{1+L^2}}$ whenever $(u,v)\in P$.}
	\end{equation}
	To this end, suppose $(u,v)\in P$. If $u>0$, then
	$$ L\geq \frac{f(t)-f(u)}{t-u} \geq \frac{g(t)-g(u)}{t-u}\ \ \text{for each}\ \ 0<t<u,$$
	 and consequently $L\geq g'_-(u)$.  Therefore $u<r$ and  $L \geq u (r^2-u^2)^{-1/2}$.
	 Analogously considering $g'_+(u)$, we obtain for $u<0$ that $u>-r$ and  $u (r^2-u^2)^{-1/2} \geq -L$. In both cases we have $L \geq |u| (r^2-u^2)^{-1/2}$ and an elementary computation gives
	 \eqref{odhu}.

	Using \eqref{odhu} we obtain that if $(u,v)\in P$ then
	\begin{equation}\label{eq:estimateg(u)}
	v=g(u)\leq -\sqrt{r^2-\left(\frac{Lr}{\sqrt{1+L^2}}\right)^2}
	=-\frac{r}{\sqrt{1+L^2}}.
	\end{equation}
	By \cite[Lemma~4.2]{Fu} and \eqref{eq:estimateg(u)} we obtain  
	\begin{equation*}
	\partial_C d(0)=\conv \left\{ \frac{1}{r}(-u,-v): (u,v)\in P \right\}\subset  \er \times \left[\frac{1}{\sqrt{L^2+1}},\infty\right)
	\end{equation*}
	and the assertion of the lemma follows.
\end{proof}

\begin{thm}\label{thm:distanceAura}
	Let $M\ne \emptyset$ be a  locally $\WDC$ set in $\er^2$. Then the distance function $d_M$ is a DC aura for $M$.
	 In particular, $M$ is a WDC set.
\end{thm}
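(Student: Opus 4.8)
The strategy is to verify directly that $d_M$ is a DC aura for $M$, i.e., that $d_M$ is DC on $\R^2$, nonnegative, has $d_M^{-1}\{0\}=M$, and that $0$ is a weakly regular value of $d_M$. The last two properties are either trivial ($d_M\ge 0$ and $d_M^{-1}\{0\}=M$ since $M$ is closed) or reduce to a local statement near $\partial M$, so the real content is (a) showing $d_M$ is DC, and (b) verifying weak regularity at boundary points. By Lemma~\ref{vldc}(ii) it suffices to show $d_M$ is DC \emph{locally} on $\R^2$. Away from $M$ this is \eqref{loksem}, and at interior points of $M$ the function $d_M$ vanishes identically on a neighbourhood, hence is DC there; so everything localizes to a neighbourhood of a point $x\in\partial M$.

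First I would fix $x\in\partial M$ and apply Theorem~PRZ to obtain $\rho>0$ so that one of the three cases (i), (ii), (iii) holds on $U(x,\rho)$. After translating, assume $x=0$. In case (i), $M\cap U(0,\rho)=\{0\}$, so near $0$ (but off $0$) $d_M$ agrees with $|\cdot|$ up to the point $0$; more carefully, on a small punctured ball $d_M=\min(|\cdot|,\,\dist(\cdot,M\setminus U(0,\rho)))$ and near $0$ this is just $|\cdot|$, which is convex, hence DC, and $0$ is trivially weakly regular. In case (ii), $M\cap U(0,\rho)=C$ is a degenerated closed DC sector; the key observation is that, near $0$, $d_M$ coincides with $\min(d_{\graph f_1}, d_{\graph f_2})$ where $f_1,f_2$ are the two DC boundary functions of $C$ (suitably rotated), since any point close to $0$ has its nearest point of $M$ on the boundary of the sector — and that boundary is locally a union of two DC graphs. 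By \eqref{dcgr} each $d_{\graph f_i}$ is DC, so by Lemma~\ref{vldc}(i) the minimum is DC. In case (iii), $U(0,\rho)\setminus M=\bigcup_{i=1}^k (x+C_i)$ with the $C_i$ open DC sectors; now $\partial M\cap U(0,\rho)$ is a finite union of DC graphs (the boundary curves of the $C_i$, each of which is locally, after rotation, the graph of a DC function), and again near $0$ the nearest point of $M$ to a point of $U(0,\rho)$ lies on one of these graphs, so $d_M$ locally equals the minimum of finitely many functions of the form $d_{\graph g_j}$, each DC by \eqref{dcgr}; Lemma~\ref{vldc}(i) finishes DC-ness.

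It remains to check that $0$ is a weakly regular value, i.e., if $x_i\to x\in\partial M$, $d_M(x_i)>0=d_M(x)$, $u_i\in\partial_C d_M(x_i)$, then $\liminf_i|u_i|>0$. This is where Lemma~\ref{lem:distranceFromGraphWR} is used: for $x_i$ off $M$ but close to $x\in\partial M$, a nearest point of $M$ lies on one of the finitely many DC boundary graphs $\graph g_j$ described above, so locally $d_M = d_{\graph g_j}$ in a neighbourhood of $x_i$ for the relevant $j$ (or more safely $d_M=\min_j d_{\graph g_j}$, and on this minimum one can still read off a uniform lower bound on gradients using the structure of $\partial_C$ of a min together with the per-graph estimate). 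Lemma~\ref{lem:distranceFromGraphWR} gives $|u_i|\ge (L_j^2+1)^{-1/2}$ where $L_j$ is a common Lipschitz constant for the finitely many $g_j$ on the relevant compact interval; taking $L:=\max_j L_j$ yields $\liminf_i|u_i|\ge (L^2+1)^{-1/2}>0$. Here one must be a little careful because the nearest-point graph may vary with $i$, and because the "min of distance functions" obscures individual gradients; the clean fix is to note that for each $i$ there is a single graph realizing the distance, pass to a subsequence so that this graph is fixed, and apply the lemma on that fixed graph. The only genuinely delicate point — and the step I expect to be the main obstacle — is the geometric claim that, near a boundary point described by Theorem~PRZ, the metric projection $\Pi_M$ lands on the union of the sector boundary curves (so that $d_M$ really does agree locally with a min of DC-graph distances): one has to rule out that a nearest point sits at the "tip" $0$ of a sector in a way not captured by a graph, which is handled by the strict monotonicity of $R(u)=\sqrt{u^2+f^2(u)}$ built into Definition~\ref{sektory} and a small-radius argument. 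With that geometric reduction in hand, the DC-ness and weak regularity both follow from \eqref{dcgr} and Lemma~\ref{lem:distranceFromGraphWR}, and the "in particular" clause is immediate since a DC aura exhibits $M$ as WDC.
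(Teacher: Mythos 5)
Your overall skeleton (localize at a boundary point via Theorem~PRZ, use \eqref{loksem} off $M$ and triviality on $\inter M$, and make \eqref{dcgr} and Lemma~\ref{lem:distranceFromGraphWR} do the work) is the same as the paper's, but the central reduction you rely on is false: near a boundary point, $d_M$ is \emph{not} the minimum of finitely many DC graph-distance functions. Two concrete failures. First, in case (iii) of Theorem~PRZ the set $M$ typically has nonempty interior near $x$ (take $k=1$), and at interior points of $M$ one has $d_M=0$ while every $d_{\graph g_j}$ is positive; since DC-ness away from $M$ is already free by \eqref{loksem}, the identity breaks exactly where the theorem has content, namely across $\partial M$, so Lemma~\ref{vldc}(i) cannot finish the argument. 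Second, in case (ii) the formula also fails \emph{outside} $M$, in the wedge ``behind'' the tip, and this is not curable by the monotonicity of $R$ or by shrinking the radius: after normalizing the sector, for $y=(-t,0)$ one has $d_M(y)=t$, whereas the distance from $y$ to the graph of \emph{any} $L$-Lipschitz extension of the boundary function through the tip is at most $Lt/\sqrt{1+L^2}<t$ (minimize $(u+t)^2+h(u)^2$ with $|h(u)|\le L|u|$). So no admissible (DC, hence locally Lipschitz) extension makes your min identity hold near the tip, and your weak-regularity step inherits the same gap: knowing that a nearest point of $M$ lies on a graph only gives $d_{\graph \tilde f}(x_i)\le d_M(x_i)$ at that point; to transfer the Clarke-gradient bound of Lemma~\ref{lem:distranceFromGraphWR} you need equality of the two functions on a neighbourhood of $x_i$, which is exactly what fails behind the tip (there the correct local formula is $d_M(y)=|y|$, not a graph distance).

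The paper's proof repairs precisely this: instead of a minimum it uses the Mixing lemma, Lemma~\ref{vldc}(iv), i.e.\ a continuous selection from a finite list of DC functions, and the list must contain, besides the two graph distances, the zero function and the norm $|\cdot|$ (distance to the tip). Concretely, in case (ii) the boundary functions are extended past the tip by lines of slope $\pm 2L$, an auxiliary set $M_0$ is formed, and the plane is partitioned into explicit regions $M_0,\dots,M_3$ (delimited by lines of slopes $\pm 1/L$, $\pm 1/(2L)$) on which $\dist(\cdot,M_0)$ equals $0$, $\dist(\cdot,\graph\tilde f)$, $\dist(\cdot,\graph\tilde g)$, $|\cdot|$ respectively; in the wedge region the gradient has norm $1$, which settles weak regularity there. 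Case (iii) is handled the same way: $d=0$ on $M$ and $d=\dist(\cdot,\graph\tilde f_i)$ on each sector $C_i$, glued by Lemma~\ref{vldc}(iv). Your outline can be salvaged only by replacing ``min plus Lemma~\ref{vldc}(i)'' with such a selection argument and by treating the tip region separately; as written, the proposal has a genuine gap at its main step.
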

\begin{proof}
	Denote $d\coloneqq  d_M$.
	For each  $x\in\partial M$  choose $\rho= \rho(x)$ by Theorem~PRZ.
	We will prove that
	\begin{enumerate}[label={\rm (\alph*)}]
		\item\label{cond:DC} $d$ is DC on $U\left( x,\frac{\rho}{3} \right)$,
		\item\label{cond:weaklyRegular} there is $\eps= \eps(x)>0$ such that $|\xi|\geq\eps$ whenever $y\in U\left( 0,\frac{\rho}{3} \right)\setminus M$ and  $\xi\in\partial_C d(y)$.
	\end{enumerate}
	Without any loss of generality we can assume that $x=0$.

If case \ref{cond:singlePointNew} from Theorem~PRZ holds, then $d(y)=|y|$, $y\in U\left( 0,\frac{\rho}{3} \right)$, and so $d$ is convex and therefore DC on $U\left(0,\frac{\rho}{3} \right)$. Similarly, condition \ref{cond:weaklyRegular} holds as well, since if 
 $y\in U\left( 0,\frac{\rho}{3} \right)\setminus M$ and $\xi\in\partial_C d(y)$ then $\xi=\frac{y}{|y|}$ and so $|\xi|=1$.

If case \ref{cond:oneConeNew} from Theorem~PRZ holds,
we know that $M\cap U(0,\rho)$ is a degenerated closed DC sector $C$ of radius $\rho$.
Let $\gamma$, $f$, $g$ and $\omega$ be as in Definition~\ref{sektory}.
Without any loss of generality we may assume that $\gamma$ is the identity map.

By Lemma \ref{vldc} (iii) we can choose $L>0$ such that both $f$ and $g$ are $L$-Lipschitz
 on $[0,\rho]$
 and define
\begin{equation*}
\tilde f(u)\coloneqq
\begin{cases}
f(u)& \text{if}\quad 0\leq u \leq \rho,\\
f(\rho)& \text{if}\quad \rho<u,\\
2Ly& \text{if}\quad u<0.
\end{cases}
\ \ \ \ 
\text{and} 
\ \ \ \ 
\tilde g(u)\coloneqq
\begin{cases}
g(u)& \text{if}\quad 0\leq u \leq \rho,\\
g(\rho)& \text{if}\quad \rho<u,\\
-2Lu& \text{if}\quad u<0.
\end{cases}
\end{equation*}

It is easy to see that both $\tilde f$ and $\tilde g$ are $2L$-Lipschitz and they
 are DC by Lemma \ref{vldc} (iv).

Put 
\begin{equation*}
 M_0\coloneqq \left\{ (u,v)\in\er^2: u\geq0, \; \tilde g(u)\leq v\leq\tilde f(u)\right\},
\end{equation*}
\begin{equation*}
M_1\coloneqq \left\{ (u,v)\in\er^2: u\geq0, \; \tilde f(u)< v\right\}\cup
 \left\{ (u,v)\in\er^2: u<0, \; -\frac{u}{2L}< v\right\},
\end{equation*}
\begin{equation*}
M_2\coloneqq \left\{ (u,v)\in\er^2: u\geq0, \; \tilde g(u)> v\right\}\cup
\left\{ (u,v)\in\er^2: u<0, \; \frac{u}{2L}> v\right\}
\end{equation*}
and
\begin{equation*}
M_3\coloneqq 
\left\{ (u,v)\in\er^2:\frac uL< v< -\frac uL\right\}.
\end{equation*}
Clearly $\er^2= M_0\cup M_1\cup M_2\cup M_3$ and  $M_1$, $M_2$, $M_3$ are open.
 
Set  $\tilde d\coloneqq\dist(\cdot, M_0)$ and, for each $y \in \R^2$, define
 $$d_0(y)=0,\ \  d_1(y)\coloneqq \dist(y,\graph \tilde f),\ \
d_2(y)\coloneqq \dist(y,\graph \tilde g),
\ \ d_3(y)\coloneqq |y|.$$
Functions $d_1$ and $d_2$ are DC on $\er^2$ by \eqref{dcgr}, $d_0$ and $d_3$ are convex and therefore DC on $\er^2$.

Using (for $K= 1/L, -1/L, 1/(2L), -1/(2L)$) the facts that the lines with the slopes $K$ and $-1/K$
 are orthogonal and $M_0\subset \{(u,v): u\geq 0,\; -Lu\leq v \leq Lu \}$, 
 easy geometrical observations show that
\begin{equation}\label{mjdt}
\tilde d(y)= d_i(y)\ \ \text{ if}\ \  y \in M_i,\  0\leq i \leq 3,
\end{equation}
 and so Lemma \ref{vldc} (iv)  implies that $\tilde d$ is DC.

Now pick an arbitrary $y\in \er^2\setminus M_0=M_1\cup M_2\cup M_3$ and choose $\xi=(\xi_1,\xi_2)\in\partial_C \tilde d(y)$. Using \eqref{mjdt}, we obtain that
if $y\in M_3$ then $\xi=\frac{y}{|y|}$ and so $|\xi|=1$. Using Lemma 
\ref{lem:distranceFromGraphWR}, we obtain that 
if $y\in M_1\cup M_2$, then $|\xi|\geq |\xi_1|\geq \frac{1}{\sqrt{4L^2+1}}$.

Now, since $d=\tilde d$ on $U\left( 0,\frac{\rho}{3} \right)$ both \ref{cond:DC} and \ref{cond:weaklyRegular} follow.

It remains to prove \ref{cond:DC} and \ref{cond:weaklyRegular} if case \ref{cond:manyConesNew} 
from Theorem~PRZ holds.
Let $C_i$, $i=1,\dots, k$, be the open DC sectors as in \ref{cond:manyConesNew}.
Denote $A_i\coloneqq \er^2\setminus C_i$
and define $\delta_i\coloneqq \dist(\cdot,A_i)$, $i=1,\dots, k$.

Note that, for $y\in U\left( 0,\frac{\rho}{3} \right)$, one has
\begin{equation*}
d(y)=
\begin{cases}
\delta_i(y)& \text{if}\; y\in C_i,\\
0&\text{if}\; y\in M.
\end{cases}
\end{equation*}
Therefore (by Lemma~\ref{vldc} (iv)) it is enough to prove that 
\ref{cond:DC} and \ref{cond:weaklyRegular}
hold with $d$ and $M$ being replaced by $\delta_i$ and $A_i$, respectively ($i=1,\dots, k$). 
Fix some $i\in \{1,\dots, k\}$.
Without any loss of generality we can assume that $C_i$ is an basic open DC sector of radius $\rho$ with corresponding $f_i$ and $\omega_i$.
Now define
\begin{equation*}
\tilde f_i(u)\coloneqq
\begin{cases}
f_i(u)&\text{if}\; u\in[-\rho,\rho],\\
f_i(-\rho)&\text{if}\; u<-\rho,\\
f_i(\rho)&\text{if}\; u>\rho.
\end{cases}
\end{equation*}
Then $\tilde f_i$ is Lipschitz and DC on $\er$.
Put $\tilde d_i(y)=\dist(y,\graph \tilde f_i)$.
Then $\tilde d_i$ is DC by \eqref{dcgr} and $0$ is a weakly regular value of $\tilde d_i$ by Lemma~\ref{lem:distranceFromGraphWR}.
And since $d_i=\tilde d_i$ on $U\left( 0,\frac{\rho}{3} \right)$ we are done.

Since $d$ is locally DC on $\er^2\setminus M$ by \eqref{loksem} and on the interior of $M$ (trivially), (a) implies that $d$ is locally DC and so DC by Lemma \ref{vldc} (ii). Further, (b)
 immediately implies that $0$ is a weakly regular value of $d$ and thus $d=d_M$ is an aura for $M$.
\end{proof}

\begin{rem}
By	Theorem~\ref{thm:distanceAura}, in $\er^2$  locally $\WDC$ sets and WDC sets coincide. 
This gives a partial answer to the part of \cite[Problem 10.2]{PR} which asks whether
 the same is true in each $\R^d$. 
\end{rem}

\section{Complexity of the system of WDC sets}\label{compl}

	In the following,  we will work in each moment in an $\R^d$ with a fixed $d$, and so for simplicity we will use the notation, in which the dependence on $d$ is usually omitted.
	
	The space of all nonempty compact subsets of $\R^d$ equipped with the usual Hausdorff metric
	$\rho_H$ is denoted by $\K$. It is well-known (see, e.g., \cite[Proposition~2.4.15 and Corollary~2.4.16]{Sri98}) that $\K$ is a separable
	 complete metric space. For a closed set $M\subset \er^d$, we set
	$\K(M)\coloneqq  \{K \in \K: K \subset M\}$, which is clearly a closed subspace of $\K$.
	 The set of all nonempty compact $\WDC$ subsets of $M\subset \er^d$ will be denoted by $\WDC(M)$.

In this section, we will prove the following theorem.
\begin{thm}\label{complwdc}
\begin{enumerate}
\item[(i)]\ $WDC(\R^2)$\ is an $F_{\sigma\delta\sigma}$ subset of $\K(\R^2)$.
\item[(ii)]\  $WDC(\R^d)$\ is an analytic subset of $\K(\R^d)$ for each $d \in \en$.
\end{enumerate}
\end{thm}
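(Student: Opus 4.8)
The plan is to describe, for each part, an explicit Borel (resp.\ analytic) definition of $\WDC(M)$ inside $\K(M)$ and then estimate its complexity by counting quantifiers. For part (ii), I would use the characterization "A is WDC iff there is a DC aura $f$ for $A$" together with Theorem~\ref{thm:distanceAura}-type ideas only indirectly; the natural route is: a compact set $K$ is WDC iff there exists a DC function $f\colon\R^d\to[0,\infty)$ with $K=f^{-1}\{0\}$ and $0$ a weakly regular value. I would first exhibit a Polish space parametrizing candidate auras—e.g.\ pairs $(g_1,g_2)$ of convex functions on a large ball, coded by their values on a countable dense set, so that $f=g_1-g_2$—and check that the relation "$f$ is a DC aura for $K$" is Borel in $(g_1,g_2,K)$. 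The condition $K=f^{-1}\{0\}$ is Borel (it is a countable conjunction of closed/open conditions on $\rho_H$ and on values of $f$ at rational points, using continuity), and weak regularity of the value $0$ can be rephrased, via Lemma~\ref{clod} and the upper semicontinuity \eqref{uzcl} of $\partial_C f$, as a Borel condition on the countable data (a $\forall\exists$ over rationals of closed conditions on difference quotients of $f$, which are themselves Borel in the coding). Then $\WDC(\R^d)$ is the projection of this Borel set along the $(g_1,g_2)$-coordinate, hence analytic; this gives (ii).

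For part (i), the improvement to $F_{\sigma\delta\sigma}$ comes from replacing the existential quantifier over \emph{arbitrary} DC auras by the \emph{canonical} choice $f=d_K$, which is legitimate precisely by Theorem~\ref{thm:distanceAura}: in $\R^2$, $K$ is WDC iff $K$ is locally WDC iff $d_K$ is a DC aura for $K$. Since $K\mapsto d_K$ is continuous from $\K(\R^2)$ into $C(B)$ for each closed ball $B$ (indeed $1$-Lipschitz in the appropriate sense), the problem reduces to estimating the complexity of the set of $K$ for which the explicitly given function $d_K$ (i) is DC on a neighbourhood of $K$ and (ii) has $0$ as a weakly regular value—the other requirements ($d_K\ge 0$, $d_K^{-1}\{0\}=K$) being automatic. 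Being DC on a ball $B$ is, by Lemma~\ref{vldc} and standard descriptive-set-theoretic facts about the cone of DC functions in $C(B)$, a Borel (in fact $F_{\sigma\delta}$-type) condition on $d_K$, and weak regularity of $0$ unwinds, via Lemma~\ref{clod}, to a condition of the shape ``$\forall$ rational $\eps>0$ $\exists$ rational $\delta>0$ such that a certain closed condition on difference quotients of $d_K$ holds'', i.e.\ a $\Pi^0_3$/$F_{\sigma\delta}$ clause; intersecting finitely/countably many such clauses and then taking the $\sigma$ from a covering-by-balls argument yields the bound $F_{\sigma\delta\sigma}$.

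The main obstacle, and the step I would spend the most care on, is the passage from the ``local'' descriptions (weak regularity is a statement about $\partial_C d_K$ near each boundary point, and DC-ness near $K$ is also local) to a \emph{uniform}, countably-quantified global condition with controlled complexity. Concretely: Theorem~PRZ and Theorem~\ref{thm:distanceAura} give, for each $x\in\partial K$, a radius $\rho(x)$ and an $\eps(x)$, but to get a Borel formula one must bound these uniformly on compact pieces and express ``there is a finite cover of $\partial K$ by balls on each of which the sector structure holds'' as a Borel condition in $K$—this requires a quantitative, $K$-measurable version of the structural dichotomy, presumably extracted by a compactness argument together with the continuity of $K\mapsto d_K$ and the closedness of the relevant conditions under Hausdorff limits. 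A secondary technical point is verifying that ``$h\in C(B)$ is DC'' is Borel in $C(B)$; this follows from writing DC-ness as: $\exists$ rational $C$ such that $h+C\|\cdot\|^2$ agrees, on rational points, with a pointwise supremum of affine functions with rational coefficients drawn from a countable list that is itself built from $h$—a $\Sigma^0_2$/$F_\sigma$ condition—but the bookkeeping must be done carefully so that the final count does not exceed $F_{\sigma\delta\sigma}$.
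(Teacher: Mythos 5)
Your overall architecture does match the paper's (for (i): use Theorem~\ref{thm:distanceAura} to replace the existential quantifier over auras by the canonical $d_K$, use continuity of $K\mapsto d_K$, and compute the complexity of the two clauses ``$d_K$ is DC'' and ``$0$ is weakly regular''; for (ii): realize $\WDC$ as the projection of a Borel set of pairs (set, aura)). But there is a genuine gap in the step you yourself flag as a ``secondary technical point'': your proposed Borel description of DC-ness is false. You claim $h$ is DC iff there is a rational $C$ such that $h+C\|\cdot\|^2$ is a pointwise supremum of affine functions, i.e.\ convex. That characterizes \emph{semiconvex} functions, a strictly smaller class than DC: for instance $h(x)=-|x_1|$ is concave, hence DC, but $h+C\|\cdot\|^2$ is never convex. (The distinction matters here: by Bangert's result quoted in Section 2, semiconvex auras correspond to positive reach, so your clause would cut the class down far too much.) Since the $F_\sigma$-type bound for the DC clause is an essential ingredient of the final $F_{\sigma\delta\sigma}$ count in (i) (and of Borelness in (ii)), this step needs a different argument. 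The paper proves it by normalizing the decomposition: if $f=g-h$ on $B(0,4)$ with $g,h$ convex Lipschitz, one may take both $n$-Lipschitz with $g(0)=0$, hence uniformly bounded; the set $C_n$ of such convex functions is compact by Arzel\`a--Ascoli, so the set $D$ of differences is $\sigma$-compact, giving \eqref{bod}. Nothing resembling a ``supremum of affine functions with coefficients built from $h$'' is needed, and no correct substitute is supplied in your sketch.

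A second, smaller point: the ``main obstacle'' you describe --- making the radii $\rho(x)$ and constants $\eps(x)$ from Theorem~PRZ uniform and encoding ``a finite cover of $\partial K$ by balls with sector structure'' as a Borel condition in $K$ --- does not arise in a correct execution of this strategy and would be a harder route. Once Theorem~\ref{thm:distanceAura} is invoked, Theorem~PRZ plays no further role; all complexity estimates are carried out in the function space of restrictions $f=d_K\restriction_{B(0,4)}$. The genuine uniformization issue is different: weak regularity must be rewritten so that the radius is quantified existentially \emph{before} the universal quantifier over points, since otherwise one faces an uncountable intersection of non-closed conditions. The paper does this by first replacing weak regularity by a single-$\eps$ statement on the level set $f^{-1}(0,\eps)$ (Lemma~\ref{swr2}) and then running a Lebesgue-number/compactness argument on the compact slice $f^{-1}([p,q])$, together with a finite $\eps$-net of directions, to obtain the countably quantified formula of Lemma~\ref{lem:A_dFormula}, whose building blocks are closed; this is what yields $F_{\sigma\delta\sigma}$ in Corollary~\ref{cor:AFsigmaDeltaSigma}. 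Your sketch gestures at a compactness argument but aims it at the sector decomposition of $K$ rather than at the level sets of $f$, so as written it does not produce the needed formula.
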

Before the proof of this theorem, we introduce some spaces,
  make a number of observations, and prove a technical lemma.

	First observe that  $WDC(\R^d)=  \bigcup_{n=1}^{\infty}  WDC(B(0,n))$ and so, to prove Theorem \ref{complwdc}, it is sufficient to prove that, for each $r>0$,
\begin{equation}
	\begin{aligned}\label{staci2}
	&\text{for $d=2$ (resp.  $d \in \en$), $WDC(B(0,r))$}\\ 
	&\text{is an $F_{\sigma\delta\sigma}$ (resp. analytic) subset of $\K(B(0,r))$.}
	\end{aligned}
\end{equation}
						
Further observe that it is sufficient to prove 	\eqref{staci2} for $r=1$. Indeed, denoting
 $H(x)\coloneqq  x/r,\ x \in \R^d$, it is obvious that $H^*: K \mapsto H(K)$ gives a homeomorphism
 of $\K(B(0,r))$ onto $\K(B(0,1))$ and $H^*(WDC(B(0,r))=WDC(B(0,1))$ (clearly $f$ is an aura for
 $K$ if and only if $f \circ H^{-1}$ is an aura for $H^*(K)$).

To prove \eqref{staci2} for $r=1$, we will consider the space
 $X$  of all $1$-Lipschitz functions $f:B(0,4)\to [0,4]$
such that $f\geq 1$ on $B(0,4) \setminus U(0,3)$, equipped with the supremum metric $\rho_{\sup}$. 
Obviously, $X$ is a closed subspace of $C(B(0,4))$ and so it is a separable complete metric space.

The motivation for introducing $X$ is the fact that
\begin{equation}\label{moti}
\text{if $K\in \K(B(0,1))$, then  $f_K\coloneqq  d_K\restriction_{B(0,4)} \in X$.}
\end{equation}
Since we are interested in $K \in WDC(B(0,1))$, we define also two subspaces of $X$:
$$ A\coloneqq  \{f\in X:\ 0\ \ \text{ is a weakly regular value of }\ f|_{U(0,4)}\},$$
$$D\coloneqq  \{f\in X:\ f=g-h\ \ \text{for some convex Lipschitz functions}\ \ g, h\ \ \text{on}\ \ B(0,4)\}.$$
Their complexity is closely related to the complexity of $WDC(B(0,1))$, as the following  lemma 
 indicate.

\begin{lem}\label{lem:auraINXd}
Let $\emptyset \neq K\subset B(0,1)\subset\er^d$ be compact. Then:
\begin{enumerate}
\item[(i)]
K is WDC if and only if there is a function $g\in D\cap A$ such that $K=g^{-1}(0)$.
\item[(ii)]
If $d=2$, then K is WDC if and only if $f_K\coloneqq  d_K\restriction_{B(0,4)} \in D \cap A$.
\end{enumerate}
\end{lem}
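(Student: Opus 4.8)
The plan is to prove both directions of each equivalence, using Theorem~\ref{thm:distanceAura} as the key input for the harder implications.

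\textbf{Proof of (i).} For the ``if'' direction, suppose $g\in D\cap A$ satisfies $K=g^{-1}(0)$. Membership in $D$ gives convex Lipschitz functions $h_1,h_2$ on $B(0,4)$ with $g=h_1-h_2$; I would extend $h_1,h_2$ to convex functions on all of $\er^d$ (e.g. by the standard supremum-of-supporting-affine-functions construction, or simply noting that a convex Lipschitz function on a ball extends to a globally Lipschitz convex function) and compose with a smooth DC cut-off so as to obtain a DC function $\tilde g:\er^d\to[0,\infty)$ that agrees with $g$ on a neighbourhood of $B(0,1)\supset K$ and is bounded below by a positive constant outside $U(0,3)$, hence $\tilde g^{-1}(0)=K$. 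Since $g\in A$ means $0$ is a weakly regular value of $g|_{U(0,4)}$, and weak regularity of the value $0$ is a local condition near $K\subset U(0,1)$, the value $0$ is weakly regular for $\tilde g$ as well; thus $\tilde g$ is a DC aura for $K$ and $K$ is WDC. For the ``only if'' direction, let $K$ be WDC with DC aura $f:\er^d\to[0,\infty)$. One cannot simply restrict $f$, since $f$ need not lie in $X$ (it may fail $f\ge 1$ off $U(0,3)$, or fail to be $1$-Lipschitz, or exceed $4$). So I would modify $f$: first replace $f$ by $\min(f,c)$ for a small constant $c>0$, which is still DC by Lemma~\ref{vldc}(i), still has zero set $K$, and still has $0$ as a weakly regular value (the weak regularity condition only concerns points where the value is exactly $0$, unaffected by the truncation since $K\subset U(0,1)$ is far from where $f=c$ once $c$ is small enough that $\{f<c\}$ lies in $U(0,3)$ — here one uses that $f$ is continuous and $f>0$ on the compact set $B(0,4)\setminus U(0,3)$, after possibly first scaling). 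Then I would add a suitable convex function, e.g. replace $f$ by $\max(f,\,\lambda(|x|-3))$ or by $f+\text{(a convex bump)}$, to force the value to be $\ge 1$ on $B(0,4)\setminus U(0,3)$ while not disturbing the zero set; rescale the whole thing by a positive constant to bring it into $[0,4]$ and to make it $1$-Lipschitz (DC Lipschitzness on the compact convex ball $B(0,4)$ comes from Lemma~\ref{vldc}(iii), so dividing by the Lipschitz constant works). The resulting function lies in $D$ (difference of convex Lipschitz functions on $B(0,4)$, using that any DC function on the convex set $B(0,4)$ is a difference of two convex functions there, which is the definition), lies in $A$, and has zero set $K$.

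\textbf{Proof of (ii).} Suppose $d=2$. The ``if'' direction is immediate from part (i): if $f_K\in D\cap A$ then, since $f_K=d_K\restriction_{B(0,4)}$ has zero set $K\cap B(0,4)=K$, part (i) gives that $K$ is WDC. For the ``only if'' direction, let $K\in\K(B(0,1))$ be WDC. By Theorem~\ref{thm:distanceAura} (applied with $M=K$, which is in particular a locally WDC set), the global distance function $d_K$ is a DC aura for $K$; in particular $d_K$ is a DC function on $\er^2$ and $0$ is a weakly regular value of $d_K$. By \eqref{moti}, $f_K=d_K\restriction_{B(0,4)}\in X$. Since $d_K$ is DC on $\er^2$, its restriction to the compact convex set $B(0,4)$ is a difference of two convex functions there, and these are Lipschitz by Lemma~\ref{vldc}(iii); hence $f_K\in D$. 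Weak regularity of the value $0$ for $d_K|_{U(0,4)}$ follows from weak regularity of $0$ for $d_K$ on all of $\er^2$ (the condition for points of $U(0,4)$ is a restriction of the global one). Hence $f_K\in D\cap A$.

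\textbf{Main obstacle.} The routine-sounding but genuinely fiddly part is the ``only if'' direction of (i): manipulating an arbitrary DC aura $f$ into the normalized class $X$ (values in $[0,4]$, $1$-Lipschitz, $\ge 1$ outside $U(0,3)$) while simultaneously preserving both the zero set $K$ and the weak regularity of $0$. One must be careful that truncating ($\min$ with a constant) and lifting ($\max$ with a convex function or adding a bump) are done in the right order and with constants chosen so that the modifications take place only in the region $B(0,4)\setminus U(0,1)$, leaving a full neighbourhood of $K$ untouched — this is what guarantees that weak regularity of the value $0$, being a purely local property near the zero set, survives. Part (ii) is then essentially free given Theorem~\ref{thm:distanceAura}.
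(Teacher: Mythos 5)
Your overall route coincides with the paper's: part (ii) is reduced to part (i) together with Theorem~\ref{thm:distanceAura} and \eqref{moti}; the ``if'' direction of (i) extends/glues $g$ away from $K$ into a global DC aura (the paper does this more simply by taking $\min(g,1)$ on $U(0,4)$ and the constant $1$ elsewhere, which is locally DC and hence DC by Lemma~\ref{vldc}(ii)); and the ``only if'' direction of (i) normalizes an arbitrary aura into the class $X$ by rescaling and taking a maximum with a convex function. Part (ii) and the ``if'' direction of (i) are fine as sketched.

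The one concrete slip is in your normalization in the ``only if'' direction of (i). As you order the steps, the rescaling comes last (to force $1$-Lipschitzness and values in $[0,4]$), but any rescaling by a constant $<1$ destroys the requirement that the function be $\geq 1$ on $B(0,4)\setminus U(0,3)$; moreover the suggested lift $\max(f,\lambda(|x|-3))$ cannot produce values $\geq 1$ on all of $B(0,4)\setminus U(0,3)$ once $\lambda\leq 1$ (which is forced if the result is to be $1$-Lipschitz), since then $\lambda(|x|-3)\leq 1$ with equality only at $|x|=4$. The paper's proof does it in the opposite order and with a different affine shift: first choose $\alpha>0$ so small that $\alpha f$ is $1$-Lipschitz and $\leq 4$ on $B(0,4)$ (using Lemma~\ref{vldc}(iii)), then put $h(x)=\max\bigl(|x|-2,\alpha f(x)\bigr)$ and restrict to $B(0,4)$: both pieces are $1$-Lipschitz, $h\leq 4$ on $B(0,4)$, $h\geq 1$ for $|x|\geq 3$, $h^{-1}(0)=K$, and $h=\alpha f$ on $U(0,2)$, so weak regularity of the value $0$ is inherited (via \eqref{cls}) with no further rescaling needed; your preliminary truncation $\min(f,c)$ is then superfluous. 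With this reordering and the constant $|x|-2$ in place of $\lambda(|x|-3)$, your argument is exactly the paper's.
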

\begin{proof}
	(i)\ Suppose first that $K$ is WDC and  $f$ is an aura for $K$.
Using Lemma \ref{vldc}(iii), we can chose $\alpha>0$ so small that  the function $\alpha f$ is
 $1$-Lipschitz on $B(0,4)$ and  $0\leq \alpha f(x) \leq 4$ for $x \in B(0,4)$. Set
$$  h(x)\coloneqq 	\max(|x|-2, \alpha f(x)), \ x \in \R^d,\ \ \text{and}\ \ g\coloneqq  f\restriction_{B(0,4)}.$$
Then clearly $K=g^{-1}(0)$. Since
 $h$ is DC on $\R^d$ by  Lemma \ref{vldc}(i), we obtain $g \in D$ by Lemma \ref{vldc}(iii).
 Finally, $g \in A$, since $g=\alpha f$ on $U(0,2)$.

	Conversely, suppose  that $K=g^{-1}(0)$ for some  $g\in A\cap D$ and set
	\begin{equation*}
	f(x)\coloneqq
	\begin{cases}
	\min(g(x), 1), & \mbox{if } x\in U(0,4),\\
	1 & \mbox{otherwise}.
	\end{cases}
	\end{equation*}
	Since $f$ is DC on $U(0,4)$ by Lemma \ref{vldc} (i) and $f=1$ on $\R^d \setminus B(0,3)$, we see that
	$f$ is locally DC and so DC by  Lemma  \ref{vldc} (ii). 
	Since $0$ is clearly a weakly regular value of $f$, we obtain that $f$ is an aura for $K$.
	
	(ii)\ If  $K$ is WDC, first note that $f_K \in X$ (see \eqref{moti}). Since $d_K$
	 is an aura for $K$ by  Theorem \ref{thm:distanceAura}, we obtain immediately that $f_K \in A$, and also $f_K \in D$
	 by  Lemma  \ref{vldc} (iii). 
	
	If $f_K \in A \cap D$, then $K$ is WDC by (i).
	\end{proof}
	
For the application of Lemma \ref{lem:auraINXd}(ii)   we need the simple fact that
\begin{equation}\label{spojpsi}
\text{$\Psi: K \mapsto f_K$, \ $K \in  \K(B(0,1))$, \ \text {is a continuous mapping into}\ \ $X$.}
\end{equation} 
Indeed, if $K_1, K_2 \in  \K(B(0,1))$ with $\rho_H(K_1, K_2) <\ep$   and $x \in B(0,4)$, then clearly
$ d_{K_1}(x) <  d_{K_2}(x) + \ep$, $ d_{K_2}(x) <  d_{K_1}(x) + \ep$, and consequently
$\rho_{\sup}(f_{K_1}, f_{K_2}) \leq \ep$.

Further observe that
\begin{equation}\label{bod}
 D\ \ \text{is an}\ \ F_{\sigma}\ \ \text{subset of}\ \ \ X.
\end{equation}
 To prove it, for each $n \in \en$ set
$$ C_n\coloneqq  \{g\in C(B(0,4)):\ g \ \text{is convex}\ n\text{-Lipschitz and}\ \ |g(x)| \leq 4n+4,\ x \in B(0,4)\}.$$
Now observe that if $f \in D$ then we can choose $n \in \en$ and convex $n$-Lipschitz functions $g$, $h$ 
 such that $f=g-h$, $g(0)=0$ and consequently $\|g\|\leq 4n$, $\|h\|\leq 4n+4$, and so $g,\, h \in C_n$.
Consequently,  $D = X \cap  \bigcup_{n=1}^{\infty} (C_n-C_n)$. Each $C_n$ is clearly closed in
 $C(B(0,4))$ and so it is
 compact in 
$C(B(0,4))$ by the Arzel\` a-Ascoli theorem. Consequently also $C_n-C_n = \sigma(C_n \times C_n)$, where
 $\sigma$ is the continuous mapping $\sigma: (g,h) \mapsto g-h$, is compact, and \eqref{bod} follows. 

The most technical part of the proof of Theorem \ref{complwdc} is to show that $A$ is an 
 $F_{\sigma\delta\sigma}$ subset of $X$. To prove it,  we need some lemmas.
\begin{lem}\label{swr2}
Let $f\in X$. Then $f\in A$ if and only 
\begin{equation}\label{wr2}
\exists\, 0<\eps\;
\forall x\in f^{-1}(0,\eps),\; \nu\in\partial_C f(x)\;
:|\nu|\geq \eps.
\end{equation}
\end{lem}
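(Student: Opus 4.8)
The plan is to prove the equivalence by unravelling the definition of a weakly regular value and matching it against the quantified condition \eqref{wr2}. Recall $f\in X$ is $1$-Lipschitz with $f\geq 1$ on $B(0,4)\setminus U(0,3)$, so $f^{-1}(0)\subset U(0,3)$ and all the relevant points lie in the open set $U(0,4)$ on which $f$ is Lipschitz; this lets us use the Clarke calculus and the upper semicontinuity property \eqref{uzcl} freely.

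\textbf{Sufficiency of \eqref{wr2}.} Suppose \eqref{wr2} holds with some $\eps>0$. To check that $0$ is a weakly regular value of $f|_{U(0,4)}$, take sequences $x_i\to x$ with $f(x_i)>0=f(x)$ and $u_i\in\partial_C f(x_i)$. By continuity, $f(x_i)\in(0,\eps)$ for all large $i$, so $x_i\in f^{-1}(0,\eps)$ and \eqref{wr2} gives $|u_i|\geq\eps$; hence $\liminf_i|u_i|\geq\eps>0$, which is exactly weak regularity. So $f\in A$.

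\textbf{Necessity.} Suppose \eqref{wr2} fails for every $\eps>0$. I would like to produce a sequence violating weak regularity, but one has to be careful: the points where $|\nu|$ is small for $\nu\in\partial_C f(x)$ might have $f(x)$ bounded away from $0$, so one cannot directly take $\eps\to0$ along a single scale. Instead, apply the negation of \eqref{wr2} successively with $\eps=1/n$: for each $n$ there is a point $x_n\in f^{-1}(0,1/n)$ and $\nu_n\in\partial_C f(x_n)$ with $|\nu_n|<1/n$. Then $f(x_n)\to0$ and $|\nu_n|\to0$. Since $f^{-1}(0,1)\subset B(0,3)$ is bounded, pass to a subsequence with $x_n\to x\in B(0,3)\subset U(0,4)$; by continuity $f(x)=\lim f(x_n)=0$, and $f(x_n)>0$ for all $n$. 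Finally $\liminf_n|\nu_n|=0$, so $0$ is not a weakly regular value of $f|_{U(0,4)}$, i.e.\ $f\notin A$.

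The main (indeed the only) obstacle is the one flagged above: the failure of \eqref{wr2} is a statement about \emph{one} $\eps$ at a time, so a naive attempt to extract a bad sequence directly could yield points $x_n$ with $f(x_n)$ not tending to $0$. The fix is routine — diagonalize over $\eps=1/n$ and exploit the boundedness of $f^{-1}(0,1)$ (which comes from $f\geq1$ off $U(0,3)$) to get a convergent subsequence whose limit lands in $U(0,4)$, the open set on which weak regularity of $f$ is tested. Everything else is a direct transcription between the two formulations, using only the definition of weakly regular value from Definition~\ref{def:WDC} and basic continuity of $f$. No properties of $\partial_C$ beyond its definition are needed here; the upper semicontinuity \eqref{uzcl} is not even required, though it would give an alternative closing argument (the limiting $\nu_n$ would land in $\partial_C f(x)$ and force $0\in\partial_C f(x)$).
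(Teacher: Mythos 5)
Your proposal is correct and follows essentially the same route as the paper: the forward direction is the same direct verification from the definition of weakly regular value, and for the converse both arguments apply the negation of \eqref{wr2} with $\eps=1/n$ to get $x_n\in f^{-1}(0,1/n)$, $\nu_n\in\partial_C f(x_n)$, $|\nu_n|<1/n$, extract a convergent subsequence, and use $f\geq 1$ off $U(0,3)$ to place the limit in $U(0,4)$, contradicting (or negating) membership in $A$.
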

\begin{proof}
If \eqref{wr2} holds, then we easily obtain $f \in A$ directly from the definition of
a weakly regular value.

To prove the opposite implication, suppose that $f \in A$ and \eqref{wr2} does not hold.
 Then there exist points $x_n \in f^{-1} (0, 1/n),\ n\in \en,$ and $\nu_n\in\partial_C f(x_n)$
 such that $|\nu_n|  < 1/n$. Choose a subsequence $x_{n_k} \to x \in B(0,4)$. Since
$0\leq f(x_{n_k}) < 1/n_k$, we have $f(x_{n_k}) \to f(x)=0$, and consequently $x \in U(0,4)$.
 Since $\nu_{n_k} \to 0$, we obtain that $0$ is not a weakly regular value of $f|_{U(0,4)}$, which contradicts 
 $f \in A$.
\end{proof}

Denote $\qp\coloneqq \q\cap (0,1)$ and for every $\ep \in\qp$ and $d\in\en$ pick a finite set $\S^d_{\ep}\subset S^{d-1}$ such that for every $v\in S^{d-1}$ there is some $\nu\in\S^d_{\ep}$ satisfying $|v-\nu|< \ep$.

\begin{lem}\label{lem:A_dFormula}
	Let $f$ be a function from $X$. Then $f\in A$ if and only if
	\begin{equation}\label{eq:A_dFormula}
	\begin{aligned}
	\exists \eps \in\qp\;
	\forall p,q\in\qp, 0<p<q<\eps \;
	\exists \rho\in\qp \;
	\forall x\in U(0,4):\;\\
	(f(x)\notin (p,q) \vee 
	\exists\nu\in\S^d_{\eps}\,\forall y\in U(x,\rho), 0<\alpha<\rho: f(y+\alpha \nu)-f(y)\leq-\eps \alpha).
	\end{aligned}
	\end{equation}
\end{lem}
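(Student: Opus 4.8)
The plan is to derive \eqref{eq:A_dFormula} by combining Lemma~\ref{swr2} with the two parts of Lemma~\ref{clod}, which convert between the condition $\dist(0,\partial_C f(x))\ge\eps$ and the ``metric'' condition that $f$ has, near $x$, all difference quotients in some fixed direction bounded above by $-\eps$. The $1$-Lipschitzness of the members of $X$ will be used to replace an arbitrary descent direction by a nearby one from the finite net $\S^d_{\eps}$, and a compactness argument will be used to make the radius $\rho$ occurring in \eqref{eq:A_dFormula} uniform in $x$.

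For the implication ``\eqref{eq:A_dFormula}$\Rightarrow f\in A$'' I would fix the $\eps\in\qp$ provided by the formula and take any $x$ with $f(x)\in(0,\eps)$. Squeezing $f(x)$ between rationals $0<p<f(x)<q<\eps$ with $p,q\in\qp$ and feeding $p,q$ into \eqref{eq:A_dFormula} yields some $\rho\in\qp$; instantiating its last line at $x'=x$ and observing that the disjunct $f(x)\notin(p,q)$ fails, I obtain $\nu\in\S^d_{\eps}\subset S^{d-1}$ witnessing condition \eqref{derpod} at $x$ with this $\eps$ (shrinking $\rho$ if necessary so that the relevant points lie in $U(0,4)$, which is possible since $f(x)<1$ forces $x\in U(0,3)$). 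Lemma~\ref{clod}(ii) then gives $\dist(0,\partial_C f(x))\ge\eps$, and since $x$ was arbitrary this is precisely \eqref{wr2}, so $f\in A$ by Lemma~\ref{swr2}.

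For the converse I would start from the $\eps_0>0$ furnished by Lemma~\ref{swr2} (so that $\dist(0,\partial_C f(x))\ge\eps_0$ whenever $x\in f^{-1}(0,\eps_0)$) and fix $\eps\in\qp$ with $\eps\le\eps_0/4$. Given $p,q\in\qp$ with $0<p<q<\eps$, I would consider the compact set $Z\coloneqq\{x\in B(0,4):p\le f(x)\le q\}$, noting $Z\subset U(0,3)$ because $q<1$ and $f\ge 1$ outside $U(0,3)$. Each $x\in Z$ satisfies $0<f(x)<\eps_0$, hence $\dist(0,\partial_C f(x))\ge\eps_0=2\cdot\tfrac{\eps_0}{2}$, so Lemma~\ref{clod}(i) supplies $v_x\in S^{d-1}$ and $\rho_x>0$ with $f(y+\alpha v_x)-f(y)\le-\tfrac{\eps_0}{2}\alpha$ for $y\in U(x,\rho_x)$, $0<\alpha<\rho_x$. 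From a finite subcover $U(x_1,\rho_{x_1}/2),\dots,U(x_m,\rho_{x_m}/2)$ of $Z$ I would pick $\nu_j\in\S^d_{\eps}$ with $|\nu_j-v_{x_j}|<\eps$ and choose $\rho$ to be a rational in $\bigl(0,\min\{\tfrac12,\tfrac{\rho_{x_1}}{2},\dots,\tfrac{\rho_{x_m}}{2}\}\bigr)$. To verify \eqref{eq:A_dFormula} for this $\rho$: for $x'\in U(0,4)$ with $f(x')\notin(p,q)$ there is nothing to check, while if $f(x')\in(p,q)$ then $x'\in Z$, so $x'\in U(x_j,\rho_{x_j}/2)$ for some $j$, and with $\nu\coloneqq\nu_j$ every $y\in U(x',\rho)$, $0<\alpha<\rho$ satisfies $y\in U(x_j,\rho_{x_j})$, $0<\alpha<\rho_{x_j}$, all relevant points lie in $U(0,4)$ (as $x'\in U(0,3)$ and $\rho<\tfrac12$), and, using $1$-Lipschitzness,
\begin{equation*}
f(y+\alpha\nu_j)-f(y)\le\alpha|\nu_j-v_{x_j}|+\bigl(f(y+\alpha v_{x_j})-f(y)\bigr)<\bigl(\eps-\tfrac{\eps_0}{2}\bigr)\alpha\le-\tfrac{\eps_0}{4}\alpha\le-\eps\alpha .
\end{equation*}

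The step I expect to require actual work, rather than symbol-pushing, is the uniformity of $\rho$ for a fixed pair $p,q$: the formula demands a single radius valid for every $x$ with $f(x)\in(p,q)$, whereas Lemma~\ref{clod}(i) only produces a radius depending on the point. Compactness of $Z$ resolves this, and it is important that $Z\subset U(0,3)$ — both so that $Z$ is genuinely compact and so that all the evaluation points $y+\alpha\nu$ remain inside the domain $B(0,4)$ of $f$. The only other mild subtlety is the passage from the true descent direction $v_x$ to a nearby $\nu\in\S^d_{\eps}$, which perturbs each difference quotient by at most $\eps$; this is exactly why the slack $\eps\le\eps_0/4$ is built in.
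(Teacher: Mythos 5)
Your proof is correct and follows essentially the same route as the paper: both directions combine Lemma~\ref{swr2} with Lemma~\ref{clod}, use the same factor-of-four slack in $\eps$ and the same $1$-Lipschitz perturbation to pass to a direction from $\S^d_{\eps}$. The only cosmetic difference is that you obtain the uniform radius $\rho$ via a finite subcover with halved radii, whereas the paper invokes a Lebesgue number of the cover $\{U(z,\rho(z))\}_{z\in K}$ --- the same compactness argument in substance.
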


\begin{proof}
First suppose that \eqref{eq:A_dFormula} holds and choose $\eps \in \qp$ by \eqref{eq:A_dFormula}. We will show that
\begin{equation}\label{hv}
	\forall x\in f^{-1}(0,\eps),\; \nu\in\partial_C f(x)\;
	:|\nu|\geq \eps.
	\end{equation}
	To this end, consider an arbitrary  $x\in f^{-1}(0,\eps)$ and choose $p,q\in\qp$ such that $0<p<q<\eps$ and
	 $ f(x) \in (p,q)$. Choose $\rho \in \qp$ which exists for $\eps, p, q$ by  \eqref{eq:A_dFormula}. So there exists
	 $ \nu\in\S^d_{\eps}$  such that
	 $$ \forall y\in U(x,\rho), 0<\alpha<\rho: f(y+\alpha v)-f(y)\leq-\eps \alpha.$$
	 Therefore Lemma \ref{clod} (ii) gives  that $|\nu|\geq \eps$ for each $\nu\in\partial_C f(x)$. Thus \eqref{hv}
	  holds and so  $f\in A$ by Lemma \ref{swr2}.
	  
	  Now suppose   $ f \in A$. Using \eqref{wr2}, we can choose $\eps \in\qp$ such that
	  \begin{equation}\label{4hv}
	\forall x\in f^{-1}(0,\eps),\; \nu\in\partial_C f(x)\;
	:|\nu|\geq 4 \eps.
	\end{equation}
	To prove \eqref{eq:A_dFormula}, consider arbitrary $ p,q\in\qp$, $0<p<q<\eps$. Using Lemma  \ref{clod} (i), we easily obtain that for each $z \in K\coloneqq  f^{-1}([p,q])$ there exist  $\rho(z)>0$ and  $v(z)\in S^{d-1}$ such that 
\begin{equation}\label{ct}	
\forall y\in U(z,\rho(z)), 0<\alpha<\rho(z): f(y+\alpha v(z))-f(y)\leq-2\eps \alpha.	
\end{equation}
 Choose $\rho \in \qp$ as a Lebesgue number (see \cite{E}) of the open covering  
 $\{U(z,\rho(z))\}_{z\in K}$  of the compact $K$.
 For an arbitrary $x \in U(0,4)$, either $f(x)\notin (p,q)$ or $x \in K$. In the second case, by the definition of Lebesgue number, there exists  $z\in K$ such that  $U(x,\rho)\subset U(z,\rho(z))$. Then clearly $\rho < \rho(z)$ and so \eqref{ct} implies
 \begin{equation}\label{ct2}	
\forall y\in U(x,\rho), 0<\alpha<\rho: f(y+\alpha v(z))-f(y)\leq-2\eps \alpha.	
\end{equation}
 	By the choice of $\S_\eps^d$ there is some $\nu\in\S^d_\eps$ such that
	$|v(z)-\nu| < \eps$. By \eqref{ct2}, for each $y\in U(x,\rho)$ and $0<\alpha< \rho$, 
	$$f(y+\alpha v(z))-f(y)\leq-2\eps \alpha.$$
	Consequently, using $1$-Lipschitzness of $f\in X$, we obtain
	\begin{equation*}
	\begin{aligned}
	f(y+\alpha \nu)-f(y)&\leq f(y+\alpha v(z))-f(y)+|f(y+\alpha \nu)-f(y+\alpha v(z))|\\
	&\leq f(y+\alpha v(z))-f(y) + |\nu-v(z)|\alpha
	\leq -2\eps \alpha +\eps \alpha
	 = - \eps \alpha,
	\end{aligned}
	\end{equation*}
	and so  \eqref{eq:A_dFormula} holds.
\end{proof}

\begin{cor}\label{cor:AFsigmaDeltaSigma}
	The set $A$ is an $F_{\sigma\delta\sigma}$ subset of $X$.
\end{cor}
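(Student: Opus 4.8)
The plan is to read the $F_{\sigma\delta\sigma}$ bound directly off the logical formula \eqref{eq:A_dFormula} supplied by Lemma~\ref{lem:A_dFormula}. That formula has the shape $\exists\eps\,\forall(p,q)\,\exists\rho\,\forall x\,(\cdots)$, in which the first three quantifiers range over the \emph{countable} set $\qp$, while only the last one, $\forall x\in U(0,4)$, runs over an uncountable set. Accordingly, for fixed $\eps\in\qp$ and $p,q,\rho\in\qp$ with $0<p<q<\eps$, let $B(\eps,p,q,\rho)$ denote the set of those $f\in X$ for which the parenthesized matrix of \eqref{eq:A_dFormula} holds for every $x\in U(0,4)$. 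By Lemma~\ref{lem:A_dFormula},
\[
A=\bigcup_{\eps\in\qp}\ \ \bigcap_{\substack{p,q\in\qp\\ 0<p<q<\eps}}\ \ \bigcup_{\rho\in\qp}\ B(\eps,p,q,\rho),
\]
and all three index sets are countable.

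The heart of the matter is the claim that each $B(\eps,p,q,\rho)$ is \emph{closed} in $X$, which I would prove sequentially. Let $f_n\to f$ in $(X,\rho_{\sup})$ with every $f_n\in B(\eps,p,q,\rho)$, and fix $x\in U(0,4)$. If $f(x)\notin(p,q)$ there is nothing to check for this $x$; otherwise $f(x)\in(p,q)$, and since $(p,q)$ is open and $f_n(x)\to f(x)$ we get $f_n(x)\in(p,q)$ for all large $n$, whence the membership $f_n\in B(\eps,p,q,\rho)$ forces a direction $\nu_n\in\S^d_\eps$ with $f_n(y+\alpha\nu_n)-f_n(y)\le-\eps\alpha$ for all $y\in U(x,\rho)$ and $0<\alpha<\rho$. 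This is where the (deliberately arranged) \emph{finiteness} of $\S^d_\eps$ enters: some fixed $\nu\in\S^d_\eps$ equals $\nu_n$ for infinitely many $n$, and passing to that subsequence and invoking the $\rho_{\sup}$-continuity of the evaluation functionals $g\mapsto g(z)$ ($z\in B(0,4)$) yields $f(y+\alpha\nu)-f(y)\le-\eps\alpha$ for all $y\in U(x,\rho)$, $0<\alpha<\rho$; hence $f\in B(\eps,p,q,\rho)$. (One checks along the way that the points $y+\alpha\nu$ being evaluated lie in $B(0,4)$ — shrinking $\rho$ if needed, or reading the inequality as vacuous outside $B(0,4)$ — which does not affect the argument.)

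Granting the claim, the rest is bookkeeping in the Borel hierarchy: for fixed $\eps,p,q$ the union $\bigcup_{\rho\in\qp}B(\eps,p,q,\rho)$ is a countable union of closed sets, hence $F_\sigma$; intersecting over the countably many admissible pairs $(p,q)$ gives an $F_{\sigma\delta}$ set; and the countable union over $\eps\in\qp$ then gives an $F_{\sigma\delta\sigma}$ set, namely $A$. I expect the single genuine obstacle to be the closedness claim — specifically, the clash between the uncountable quantifier $\forall x$ and the $x$-dependent inner witness $\nu$ — which is precisely what the finiteness of $\S^d_\eps$ is designed to neutralize via the pigeonhole step; all remaining points are routine.
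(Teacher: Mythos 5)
Your proposal is correct and follows essentially the same route as the paper: the identical decomposition $A=\bigcup_{\eps}\bigcap_{p,q}\bigcup_{\rho}B(\eps,p,q,\rho)$ read off from Lemma~\ref{lem:A_dFormula}, with the key point in both cases being that the uncountable intersection over $x\in U(0,4)$ is closed thanks to the finiteness of $\S^d_{\eps}$. The paper merely phrases the closedness structurally --- the inner set is $\bigcap_{x}\bigl(D(x,p,q)\cup\bigcup_{\nu\in\S^d_{\eps}}\bigcap_{y,\alpha}C(y,\nu,\alpha,\eps)\bigr)$, a closed set since the sets $D(x,p,q)$ and $C(y,\nu,\alpha,\eps)$ are closed and closedness survives arbitrary intersections and finite unions --- which your sequential pigeonhole argument simply reproves by hand.
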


\begin{proof}
For each quadruple $y \in \er^d$,  $\nu \in  S^{d-1}$, $\alpha>0$, $\eps>0$ we set
$$ C(y,\nu,\alpha, \eps) \coloneqq  \{ f \in X:\ f(y+\alpha \nu)-f(y)\leq-\eps \alpha\}.$$
(Of course, we have  $C(y,\nu,\alpha, \eps)= \emptyset$ if $y \notin U(0,4)$ or
  $y+\alpha \nu\notin U(0,4)$.)  Further, for each triple $x\in U(0,4)$, $0<p<q$, we set
	$$ D(x,p,q)\coloneqq  \{ f \in X:\ f(x) \notin (p,q)\}.$$
	It is easy to see that both $C(y,\nu,\alpha, \eps)$ and $ D(x,p,q)$ are always closed subsets of
	 $X$.  It is easy to see that  
	 Lemma~\ref{lem:A_dFormula} is equivalent to
\begin{equation*}
\begin{aligned}
A=\bigcup_{ \eps \in\qp}\;
\bigcap_{\substack{p,q\in\qp,\\0<p<q<\eps}  } \;
\bigcup_{ \rho\in\qp} \;
\bigcap_{ x\in U(0,4)}\;
\left(D(x,p,q)\ \cup \
\bigcup_{ \nu\in\S^d_{\eps}}\ \bigcap_{\substack{y\in U(x,\rho),\\ 0<\alpha<\rho} } \ C(y,\nu,\alpha, \eps) \right).
\end{aligned}
\end{equation*}
	Therefore, since $\qp$ is countable and each $\S^d_{\eps}$ is finite, we obtain that
	   $A$ is an $F_{\sigma\delta\sigma}$ subset of $X$.
\end{proof}

{\bf The proof of Theorem \ref{complwdc}.} We know that it is sufficient to prove
 \eqref{staci2} for $r=1$. 

Suppose $d=2$. Then  Lemma \ref{lem:auraINXd}(ii) gives that $WDC(B(0,1))= \psi^{-1}(A \cap D)$,
 where $\psi: \K(B(0,4)) \to X$ is the continuous mapping from \eqref{spojpsi}. Since
 $A\cap D$ is an $F_{\sigma\delta\sigma}$ subset of $X$ by Corollary \ref{cor:AFsigmaDeltaSigma}
 and \eqref{bod},
  we obtain \eqref{staci2} for $r=1$ and $d=2$, and thus also assertion (i) of Theorem \ref{complwdc}.
	
To prove assertion (ii) of Theorem \ref{complwdc}, it is sufficient to prove  that (in each $\R^d$)
 $WDC(B(0,1))$ is an analytic subset of $\K(B(0,1))$.
To this end,  consider the following subset  $S$ of $\K(B(0,1)) \times X$:
$$ S\coloneqq  \{(K,f)\in \K(B(0,1)) \times X:\ f^{-1}(0)=K ,\ f\in A \cap D\}.$$
By   Lemma \ref{lem:auraINXd}(i),   $WDC(B(0,1)) = \pi_1(S)$ (where  $\pi_1(K,f)\coloneqq  K$)  and so it is sufficient to prove that $S$ is Borel.
Denoting $$Z\coloneqq   \{(K,f)\in \K(B(0,1)) \times X:\ K = f^{-1}(0),\ f\in X\},$$ we have $S= Z \cap (\K(B(0,1)) \times (A\cap D))$.
 So, since $A\cap D$ is Borel by Corollary \ref{cor:AFsigmaDeltaSigma}
 and \eqref{bod},  to prove that $S$ is Borel, it is sufficient to show that
 $Z$ is Borel in  $\K(B(0,1)) \times X$. To this end, denote for each $n\in \en$
$$ P_n\coloneqq  \{ (K,f) \in \K(B(0,1)) \times X:\ \exists x \in K: f(x) \geq 1/n\},$$
 $$Q_n\coloneqq  \{ (K,f) \in \K(B(0,1)) \times X:\ \exists x \in B(0,4): \ \dist(x,K)\geq 1/n, f(x)=0\}.$$
 Since clearly 
 $$Z=  (\K(B(0,1)) \times X) \setminus (\bigcup_{n=1}^{\infty} P_n \cup \bigcup_{n=1}^{\infty} Q_n),$$
 it is sufficient to prove that all $P_n$ and $Q_n$ are closed.

So suppose that $(K_i,f_i) \in \K(B(0,1)) \times X, \ i=1,2,\dots,$  $(K,f) \in \K(B(0,1)) \times X$, $\rho_H(K_i,K) \to 0$ and $\rho_{\sup}(f_i,f) \to 0$. 

First suppose that $n\in \en$ and all $(K_i,f_i) \in P_n$. Choose $x_i \in K_i$ with $f_i(x_i) \geq 1/n$. Choose a convergent subsequence $x_{i_j}\to x \in \R^d$. It is easy to see that $x \in K$.
Since $|f_{i_j}(x_{i_j}) - f(x_{i_j})| \to 0$  and  $f(x_{i_j}) \to f(x)$, we obtain
 $f_{i_j}(x_{i_j})\to f(x)$, and consequently $f(x)\geq 1/n$. Thus $(K,f) \in P_n$ and therefore
 $P_n$ is closed.

Second, suppose that $n\in \en$ and all $(K_i,f_i) \in P_n$. Choose $x_i \in B(0,4)$ such that
 $\dist(x_i,K_i)\geq 1/n$ and  $f_i(x_i)=0$. Choose a convergent subsequence $x_{i_j}\to x \in B(0,4)$. Since $|f_{i_j}(x_{i_j}) - f(x_{i_j})| \to 0$  and  $f(x_{i_j}) \to f(x)$, we obtain $f(x)=0$.
 Now consider an arbitrary $y \in K$ and choose a sequence $y_j \in K_{i_j}$ with $y_j \to y$.
 Since $|x_{i_j} - y_j| \geq 1/n$ and $x_{i_j}\to x$, we obtain that $|y-x|\geq 1/n$ 
 and consequently $\dist(x,K) \geq 1/n$. Thus $(K,f) \in Q_n$ and therefore
 $Q_n$ is closed.

\end{document}